\newtheorem{theorem}{Theorem}[section]
\newtheorem{definition}[theorem]{Definition}
\newtheorem{remark}[theorem]{Remark}
\newtheorem{lemma}[theorem]{Lemma}
\newtheorem{example}[theorem]{Example}
\numberwithin{equation}{section}
\def \H {\mathbb{H}}
\def \X {\mathbf{X}}
\def\i{{\bf i}}
\def\j{{\bf j}}
\def\k{{\bf k}}
\def\om{\omega}
\def\e{{\bf e}}
\def\u{{\bm \mu}}
\def\erf{{\mathrm{erf}}}
\def \La {L^1(\mathbb{R}^2,\H)}
\def \Lb {L^2(\mathbb{R}^2,\H)}
\def\Sc{{\mathrm{Sc}}}
\def\Vec{{\mathrm{Vec}}}
\newenvironment{proof}{\noindent{\em \textbf{Proof.}}}{\quad \hfill$\Box$\vspace{2ex}}
\title{Generalized Sampling Expansions Associated with Quaternion Fourier Transform}
\author{Dong Cheng\thanks{chengdong720@163.com}  }
\author{Kit Ian Kou\thanks{Corresponding author: kikou@umac.mo}}
\affil{\normalsize{Department of Mathematics, Faculty of Science and Technology, University of Macau, Macao, China}}
\date{}
\begin{document}

   \maketitle

\begin{abstract}
\normalsize

Quaternion-valued signals along with quaternion Fourier transforms (QFT) provide an effective framework for  vector-valued signal and  image  processing. However, the sampling theory of quaternion valued signals has not been well developed.  In this paper, we present the generalized sampling expansions  associated with  QFT by using the generalized translation and convolution. We show that  a  $\sigma$-bandlimited  quaternion valued signal in QFT sense can be reconstructed from the samples of output signals of $M$ linear systems based on QFT. Quaternion linear canonical transform (QLCT) is a generalization of QFT with six parameters. Using the relationship between QFT, we derive the sampling formula for $\sigma$-bandlimited quaternion-valued signal in QLCT sense.  Examples are given to  illustrate our results.
\end{abstract}

\begin{keywords}
Quaternion Fourier transform; quaternion linear canonical transform; sampling expansions; generalized translation; convolution theorem
\end{keywords}


\section{Introduction}\label{S1}

{Generalized} sampling expansions  (GSE) developed by Papoulis \cite{papoulis1977generalized} indicates that a $\sigma$-bandlimited signal can be reconstructed from the  samples of output signals of $M$ linear   systems. Namely,
\begin{equation*}
f(t)=\sum^\infty_{n=-{\infty}}\sum_{m=1}^M g_m(nT)y_m(t-nT)
\end{equation*}
where $g_m~(1\leq m\leq M)$ are output signals of $M$ linear  systems, $y_m~(1\leq m\leq M)$ are determined by a linear simultaneous equations whose  coefficients are generated by system functions.
Some classical sampling expansions, for instance, Shannon sampling expansions are special cases of  Papoulis' result by choosing specific systems.

Over the years, the GSE has been extended in different ways. Hoskins and Pinto \cite{hoskins1984generalized} extended
the GSE to   bandlimited  distribution functions.  A multidimensional extension of   GSE was   introduced by Cheung \cite{cheung1993multidimensional} for real-valued functions. While, Wei, Ran and Li  \cite{wei2010generalized}  presented the GSE with generalized integral transformation, such as   fractional Fourier transform. In this paper, higher-dimensional extension of GSE to quaternion-valued functions are studied. By powerful modelling of rotation and orientation, quaternion have shown advantages in physical and engineering applications such as computer graphics \cite{shoemake1985animating,hanson1995quaternion} and robotics \cite{yun2006design}. Furthermore, QFT has been regarded as a useful analysis tool in color  image and signal processing   \cite{sangwine1996fourier,bulow1999hypercomplex,bihan2003quaternion,le2004singular,ell2007hypercomplex}  in recently years.  Therefore, it is desirable to define a system based on QFT to analyze quaternion-valued signals. Moreover, it is worthwhile and interesting to
investigate the GSE using the samples of output signals of $M$ linear systems based on QFT.
However, for the non-commutativity of the quaternion multiplication, the desirable shift property of classical Fourier transform is no longer available for QFT. Meanwhile, an crucial tool in signal processing  called     convolution theorem    does not hold for QFT as well. The purpose of this paper is to overcome these problems and investigate the GSE.
In this paper, we  propose a novel translation of quaternion-valued signals and apply it to deduce the  convolution theorem of QFT. More importantly, we present the GSE associated with QFT by proposed translation and convolution.

The rest of the paper is organized as follows. In the next section, we   review    QFT  and some of its properties such as   Plancherel theorem. Section \ref{S3}  proposes a new  translation and its corresponding  convolution theorem. In Section \ref{S4}, we present the generalized sampling expansion of bandlimited quaternion-valued signals in the sense of QFT. In Section \ref{S5}, examples are presented to  illustrate our results. In Section \ref{S6}, we further discuss the sampling formula for $\sigma$-bandlimited  quaternion valued signal in QLCT sense.

\section{Preliminary}\label{S2}
\subsection{Quaternion  algebra}
Let's recall quaternion algebra $ \H :=\{q=q_0+\i q_1+\j q_2+\k q_3:~q_0,q_1,q_2,q_3\in\mathbb{R}\},$ where the imaginary  elements $\i$, $\j$ and $\k$ obey $\i^2=\j^2=\k^2=\i\j\k=-1$. For every quaternion $q=q_0+\underline{q}$, $\underline{q}=\i q_1+\j q_2+\k q_3$, the scalar and vector parts of $q$, are  defined as $\Sc(q)=q_0$ and $\Vec(q)=\underline{q}$, respectively. If $q=\Vec(q)$, then $q$ is called pure imaginary quaternion.
The quaternion conjugate is defined by $\overline{q}=q_0-\underline{q}=q_0-\i q_1-\j q_2-\k q_3$, and the norm $|q|$ of $q$ defined as
$|q|^2={q\overline{q}}={\overline{q}q}=\sum_{m=0}^{m=3}{q_m^2}$.
Then we have
\begin{equation*}
  \overline{\overline{q}}=q,~~~\overline{p+q}=\overline{p}+\overline{q},~~~\overline{pq}=\overline{q}~\overline{p},~~~|pq|=|p||q|,~~~~\forall p,q\in\H.
\end{equation*}
Using the conjugate and norm of $q$, one can define the inverse of $q\in\H\backslash\{0\}$ as $q^{-1}=\overline{q}/|q|^2$.

The quaternion exponential function $\e^{q}$ is defined by means of an infinite series as $\e^{q}:=\sum_{n=0}^\infty \frac{q^n}{n!}.$
Analogous to the complex case one may derive a closed-form representation:
$\e^{q}=\e^{q_0}(\cos|\underline{q}|+\frac{\underline{q}}{|\underline{q}|}\sin|\underline{q}|).$

Let $\X$ be a Lebesgue measurable subset of  $\mathbb{R}^2$, the left   $\H$-module $L^p(\X,\H)(p=1,2)$  consists of all $\H$-valued  functions whose $p$th power is Lebesgue integrable on $\X$. The left quaternionic inner product of $f,g\in L^2(\X,\H)$ is defined by
\begin{equation*}
 \langle f,g \rangle:=\int_{\X}f(x_1,x_2)\overline{g(x_1,x_2)}dx_1dx_2.
\end{equation*}
In  fact, $L^2(\X,\H)$ is a left quaternionic Hilbert space with inner product $\langle \cdot,\cdot\rangle $ (see \cite{brackx1982clifford}). Therefore, if $\{e_n\}$ is  an orthonormal basis of $L^2(\X,\H)$, then
\begin{equation}\label{Plancherel series}
  \langle f,g\rangle=\sum_n\langle f,e_n\rangle\langle e_n,g\rangle
\end{equation}
 holds for all $f,g\in L^2(\X,\H)$. This is a desirable property of  quaternionic Hilbert space \cite{ghiloni2013continuous}.

 \subsection{Quaternion Fourier transform }
The quaternion
Fourier transform (QFT) was first introduced by Ell to  analyze   partial differential equations \cite{ell1993quaternion}. Since then,  QFT were applied to color image processing effectively \cite{sangwine1996fourier,bulow1999hypercomplex,bihan2003quaternion,ell2007hypercomplex}.    There are different types of QFT \cite{ell2013quaternion} due to the non-commutativity of the quaternion multiplication . In  \cite{cheng2016properties},  the authors investigated the properties of distinct types of QFT thoroughly,  especially the following  right-sided QFT.
\begin{definition}[QFT]
For every $f\in\La$, the right-sided  QFT of $f$ is defined by
\begin{equation*}
 (\mathcal{F}f)(\om_1,\om_2) :=\frac{1}{2\pi} \int_{\mathbb{R}^2}f(x_1,x_2)\displaystyle\e^{-\i\om_1x_1}\e^{-\j\om_2x_2}dx_1dx_2.
\end{equation*}
\end{definition}

If $\mathcal{F}f$ is also in $\La$,    the inversion QFT formula (see \cite{hitzer2007quaternion,cheng2016properties}) holds,  that is
\begin{equation*}
f(x_1,x_2)=(\mathcal{F}^{-1}\mathcal{F}f)(x_1,x_2):= \frac{1}{2\pi} \int_{\mathbb{R}^2}(\mathcal{F}f)(\om_1,\om_2)\e^{\j \omega_2x_2}\e^{\i \omega_1x_1}d\om_1\om_2,
\end{equation*}
for almost every $(x_1,x_2)\in \mathbb{R}^2$.
 By Plancherel theorem (see \cite{hitzer2007quaternion,cheng2016properties}), the QFT can be extended to $\Lb$. As an operator on $\Lb$, the QFT $\Psi$ is a
 bijection and  the  Parseval's identity $\|\Psi f\|_2=\|f\|_2$ holds.
 \begin{remark}
Since $\Psi(\Psi^{-1})$ coincides with  $ \mathcal{F}(\mathcal{F}^{-1})$ in $\La\cap \Lb$. For simplicity of notations, in the following, by capital letter $F$, we mean the QFT of $f\in \La \cup \Lb$ if no otherwise specified.
\end{remark}

\section{Generalized translation and  convolution}\label{S3}
A generalized translation  related to  the general integral transform with kernel $K(\om,t)$   was introduced in \cite{marks2012advanced}.  Motivated by this study, we define the generalized translation to the quaternion-valued signals.

\begin{definition}\label{def generalized translation}
Let $f\in\La \cup\Lb$ and $F\in\La$. The generalized translation related to QFT is defined by
\begin{equation}\label{generalized translation}
 f( x_1\ominus y_1, x_2\ominus  y_2):= \frac{1}{2\pi}\int_{\mathbb{R}^2}\e^{-\i\om_1y_1}\e^{-\j\om_2y_2}F(\om_1,\om_2)
  \e^{\j\om_2x_2}\e^{\i\om_1x_1}d\om_1d\om_2.
\end{equation}
\end{definition}

\begin{remark}
In  the complex case, the 2D Fourier transform of  $f(x_1-y_1, x_2- y_2)$ with respect to $ (x_1,x_2)$ is $ \e^{-\i\om_1y_1}\e^{-\i\om_2y_2}\widehat{f}(\om_1,\om_2)$, where $\widehat{f}(\om_1,\om_2)$ is the  2D  Fourier transform of complex-valued function $f$. Therefore, the generalized translation $f( x_1\ominus y_1, x_2\ominus  y_2)$, in some sense, is an analogue of $ f(x_1-y_1, x_2- y_2)$. Moreover, $f( x_1\ominus y_1, x_2\ominus  y_2)$  coincides with $f(x_1-y_1, x_2- y_2)$ for some special $f(x_1,x_2)$ (see Example \ref{converges fast}).
\end{remark}

Suppose that $h\in\La \cup\Lb$, $H\in\La$ and $f\in \La$, then $$\int_{\mathbb{R}^4}|f(y_1,y_2)H(\om_1,\om_2)|dy_1dy_2d\om_1d\om_2<\infty.$$
Therefore, by Fubini's Theorem,  we have
\begin{align}
  &~~~~\frac{1}{2\pi}\int_{\mathbb{R}^2}f(y_1,y_2) ( x_1\ominus y_1, x_2\ominus  y_2)dy_1dy_2 \nonumber \\
   &=   \frac{1}{4\pi^2}\int_{\mathbb{R}^2}dy_1dy_2f(y_1,y_2)  \int_{\mathbb{R}^2} \e^{-\i\om_1y_1}\e^{-\j\om_2y_2}H(\om_1,\om_2)
   \e^{\j\om_2x_2}\e^{\i\om_1x_1}d\om_1d\om_2 \nonumber \\
     &=  \frac{1}{2\pi} \int_{\mathbb{R}^2} F(\om_1,\om_2)H(\om_1,\om_2)
   \e^{\j\om_2x_2}\e^{\i\om_1x_1}d\om_1d\om_2 \nonumber\\
   &=   (\mathcal{F}^{-1}G)(x_1,x_2),
\end{align}
where $G=FH$. Thus it is reasonable to define the following generalized convolution.
\begin{definition}
Let $f,h, G=FH \in\La \cup\Lb$. The convolution of $f$ and $h$ is defined by
\begin{equation*}
  (f\star h)(x_1,x_2):= (\mathcal{F}^{-1}G)(x_1,x_2) .
\end{equation*}
\end{definition}

Obviously, The QFT of $f\star h$ is $FH$.

\begin{theorem}\label{CONvo}
If any of the following conditions is satisfied.
\begin{enumerate}
  \item If $h\in\La \cup\Lb$, $H\in\La$ and $f\in \La$.
  \item If $h\in\La \cup\Lb$, $H\in\La\cap \Lb$ and $f\in \Lb$.
\end{enumerate}
 Then
\begin{equation*}
    (f\star h)(x_1, x_2)=\frac{1}{2\pi}\int_{\mathbb{R}^2}f(y_1,y_2) ( x_1\ominus y_1, x_2\ominus  y_2)dy_1dy_2.
  \end{equation*}
\end{theorem}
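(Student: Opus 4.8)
The plan is to verify the identity under each of the two hypothesis sets by reducing everything to the defining computation already carried out in the text just before the statement. The core observation is that the displayed chain of equalities preceding the theorem establishes precisely
\[
\frac{1}{2\pi}\int_{\mathbb{R}^2}f(y_1,y_2)\,(x_1\ominus y_1,x_2\ominus y_2)\,dy_1dy_2=(\mathcal{F}^{-1}(FH))(x_1,x_2)=(f\star h)(x_1,x_2),
\]
so the only real work is justifying the interchange of integration (the application of Fubini's theorem) and the collapse of the $y$-integral into $F$ under each of the two sets of assumptions. First I would record that in both cases $G=FH\in\La\cup\Lb$ by hypothesis, so $f\star h=\mathcal{F}^{-1}G$ is well defined, and the generalized translation $(x_1\ominus y_1,x_2\ominus y_2)$ is well defined because $h\in\La\cup\Lb$ with $H\in\La$ (so the defining integral in \eqref{generalized translation} converges absolutely).

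For case (1), the justification is exactly the one sketched before the theorem: since $f\in\La$ and $H\in\La$, the product $|f(y_1,y_2)H(\om_1,\om_2)|$ is integrable over $\mathbb{R}^4$, so Fubini applies; performing the $y$-integration first produces $\frac{1}{2\pi}\int_{\mathbb{R}^2}f(y_1,y_2)\e^{-\i\om_1y_1}\e^{-\j\om_2y_2}\,dy_1dy_2$, which is by definition $F(\om_1,\om_2)$ — here one must be a little careful about the order of the quaternion factors, but since $\e^{-\i\om_1y_1}\e^{-\j\om_2y_2}$ sits to the right of $f(y_1,y_2)$ in the integrand, this matches the right-sided QFT convention exactly. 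What remains is then $\frac{1}{2\pi}\int_{\mathbb{R}^2}F(\om_1,\om_2)H(\om_1,\om_2)\e^{\j\om_2x_2}\e^{\i\om_1x_1}\,d\om_1d\om_2=(\mathcal{F}^{-1}G)(x_1,x_2)=(f\star h)(x_1,x_2)$, using that $G=FH\in\La$ so the inversion integral is legitimate.

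For case (2), $f\in\Lb$ rather than $\La$, so the $\mathbb{R}^4$ integrand need not be absolutely integrable and the crude Fubini argument fails; this is the main obstacle. The plan is to argue by density and the Plancherel/Parseval isometry for the QFT. One route: write $g(y_1,y_2):=\overline{(x_1\ominus y_1,x_2\ominus y_2)}$ for fixed $(x_1,x_2)$ and recognize that $\frac{1}{2\pi}\int_{\mathbb{R}^2}f(y)\,(x\ominus y)\,dy$ equals a quaternionic inner product against this $g$, whose QFT is (a modulation of) $H$; then since $H\in\La\cap\Lb$ and $f\in\Lb$, both sides extend continuously from the dense subspace $\La\cap\Lb$ (on which case (1) already applies) to all of $\Lb$, using $\|\Psi f\|_2=\|f\|_2$ and the boundedness of convolution by an $\La\cap\Lb$ function to control the passage to the limit. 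Concretely, take $f_n\in\La\cap\Lb$ with $f_n\to f$ in $\Lb$; apply case (1) to each $f_n$; show the right-hand side $\frac{1}{2\pi}\int f_n(y)(x\ominus y)\,dy$ converges to $\frac{1}{2\pi}\int f(y)(x\ominus y)\,dy$ pointwise in $(x_1,x_2)$ (Cauchy–Schwarz, since $(x\ominus y)\in\Lb$ in $y$ because its QFT modulus is $|H|\in L^2$), and show the left-hand side $\mathcal{F}^{-1}(F_nH)$ converges to $\mathcal{F}^{-1}(FH)$ (in $\Lb$, since $F_n\to F$ in $\Lb$ and multiplication by $H\in L^\infty$-controlled-by-$\La\cap\Lb$ is bounded, or directly since $F_nH\to FH$ in $\La$ as $H\in\La$ and $F_n\to F$ in... — here one should instead use $F_nH\to FH$ in $\La$ via $\|(F_n-F)H\|_1\le\|F_n-F\|_2\|H\|_2$). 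Matching the two limits gives the identity for $f$. The delicate point throughout is keeping the quaternion factors in their prescribed left/right positions when invoking linearity of the integral and the inner product, since scalars from $\mathbb{R}$ commute freely but the exponential kernels do not.
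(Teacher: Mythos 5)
Your proposal is correct and follows essentially the same route as the paper: case (1) is the Fubini computation already displayed before the theorem, and case (2) is handled by approximating $f$ by a sequence $f_n\in\La\cap\Lb$, applying the absolutely convergent case to each $f_n$, and passing to the limit on both sides using Plancherel, the H\"older bound $\|(F_n-F)H\|_1\le\|F_n-F\|_2\|H\|_2$, and Cauchy--Schwarz (the paper phrases the latter as continuity of the inner product). No substantive difference.
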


\begin{proof}
The first case is obviously true. We now prove the second case.
Since  $f\in \Lb$, then $F\in \Lb$ by Plancherel theorem. Moreover,  there is a sequence $\{f_n\}$ in $\La \cap\Lb$ converging to $f$ in $L^2$ norm such
that $F=\mathop{\mathrm{l.i.m.}}\limits_{n\rightarrow\infty}F_n= \mathop{\mathrm{l.i.m.}}\limits_{n\rightarrow\infty}\mathcal{F}f_n$. Note that $H\in  \Lb$. Therefore, by H$\mathrm{\ddot{o}}$lder inequality, $FH\in \La$ and
\begin{align}
  &~~~~ (f\star h)(x_1, x_2) \nonumber \\
   &=  \frac{1}{2\pi}\int_{\mathbb{R}^2}F(\om_1,\om_2)H(\om_1,\om_2)
   \e^{\j\om_2x_2}\e^{\i\om_1x_1}d\om_1d\om_2  \nonumber\\
   &= \lim_{n\rightarrow \infty}\frac{1}{2\pi}\int_{\mathbb{R}^2} F_n(\om_1,\om_2)H(\om_1,\om_2)\e^{\j\om_2x_2}\e^{\i\om_1x_1}d\om_1d\om_2 \nonumber \\
   &=   \lim_{n\rightarrow \infty}\frac{1}{4\pi^2}\int_{\mathbb{R}^4} dy_1dy_2f_n(y_1,y_2)\e^{-\i\om_1y_1}\e^{-\j\om_2y_2} H(\om_1,\om_2)\e^{\j\om_2x_2}\e^{\i\om_1x_1}d\om_1d\om_2  \nonumber\\
  &=   \lim_{n\rightarrow \infty}\frac{1}{2\pi}\int_{\mathbb{R}^2}f_n(y_1,y_2) ( x_1\ominus y_1, x_2\ominus  y_2)dy_1dy_2\nonumber\\
   &=  \frac{1}{2\pi}\int_{\mathbb{R}^2}f(y_1,y_2) ( x_1\ominus y_1, x_2\ominus  y_2)dy_1dy_2.
\end{align}
The interchange of integral and limit  is permissible for the continuity of   inner product.
\end{proof}

\section{The GSE associated with QFT}\label{S4}
We  give a definition of bandlimited signals in QFT sense.
\begin{definition}[bandlimited]
A signal $f(x_1,x_2)$ is $\sigma$-bandlimited in QFT sense if it can be expressed as
\begin{equation*}
  f(x_1,x_2)=\frac{1}{2\pi}\int_{I}F(\om_1,\om_2)\e^{\j\om_2x_2}\e^{\i\om_1x_1}d\om_1d\om_2
\end{equation*}
where $F\in L^2(I,\H)$ and $I=[-\sigma,\sigma]^2$. For any $\sigma >0$, denote by $\mathbf{B}_{\sigma}^{q}$ the totality of the $\sigma$-bandlimited signals  in QFT sense.
\end{definition}

If $f \in \mathbf{B}_{\sigma}^{q}$, by Plancherel theorem, we have $f\in\Lb$ and the QFT of $f$ is $F$.  In this part, we show that $f$ can be reconstructed from the samples of the inverse QFT of  $M:=m^2$ functions $G_k=F H_k, ~(k=1,2,...,M)$  if $H_k $ satisfy suitable conditions.

Let $T:=\frac{m\pi}{\sigma}$, $c:=\frac{2\sigma}{m}=\frac{2\pi}{T}$ and
\begin{equation}\label{interval}
  I_{n_1n_2}:=[-\sigma+(n_1-1)c ,-\sigma +n_1 c]\times[-\sigma+(n_2-1)c ,-\sigma +n_2 c].
\end{equation}
 Then
\begin{equation}\label{bandlimited original func}
   f(x_1,x_2) 
      =  \sum_{n_1=0}^{m-1}\sum_{n_2=0}^{m-1} \frac{1}{2\pi} \int_{I_{11}}F(\om_1+n_1 c,\om_2+n_2 c) \e^{\j(\om_2+n_2 c)x_2}\e^{\i(\om_1+n_1 c)x_1}d\om_1d\om_2.
\end{equation}

To state our results, we need some further notations. Let
   \begin{equation*}
  \left\{
   \begin{array}{l}
 a_{n_1n_2}(\om_1,\om_2) := F(\om_1+(n_1-1)c ,\om_2 +(n_2-1) c),      \\
   b_{n_1n_2}(\om_1,\om_2,x_1,x_2):=\e^{\j(\om_2+(n_2-1) c)x_2}\e^{\i(\om_1+(n_1-1)c)x_1},\\
    r_{n_1n_2}^k(\om_1,\om_2):=H_k(\om_1+(n_1-1)c ,\om_2 +(n_2-1) c).
   \end{array}
  \right.
  \end{equation*}
They form the following vectors or matrices:
   \begin{equation*}
  \left\{
   \begin{array}{l}
 \overrightarrow{F}(\om_1,\om_2) :=(A(1,:),A(2,:),...,A(m,:))^T,    \\
   \overrightarrow{E}(\om_1,\om_2,x_1,x_2):=(B(1,:),B(2,:),...,B(m,:))^T,\\
    \overrightarrow{\underline{H}_k}(\om_1,\om_2):=(R_k(1,:),R_k(2,:),...,R_k(m,:))^T,\\
    \underline{H}(\om_1,\om_2):=( \overrightarrow{\underline{H}_1}(\om_1,\om_2), \overrightarrow{\underline{H}_2}(\om_1,\om_2),..., \overrightarrow{\underline{H}_M}(\om_1,\om_2)),\\
    \overrightarrow{G}(\om_1,\om_2)  := (\widetilde{G}_1(\om_1,\om_2),\widetilde{G}_2(\om_1,\om_2),...,\widetilde{G}_M(\om_1,\om_2))
   = {\overrightarrow{F}(\om_1,\om_2)}^T \underline{H}(\om_1,\om_2),
   \end{array}
  \right.
  \end{equation*}
  where   $A$, $B$, $R_k$ are $m\times m$ matrices with entries $a_{n_1n_2}(\om_1,\om_2)$, $b_{n_1n_2}(\om_1,\om_2,x_1,x_2)$, $r_{n_1n_2}^k(\om_1,\om_2)$ respectively.
 Assume that $\underline{H}$ is invertible for every $(\om_1,\om_2)\in I_{11}$. Denote the inverse of $\underline{H}$ by
 \begin{equation*}
   {\underline{H}^{-1}(\om_1,\om_2)}:=( \overrightarrow{\underline{Q}_1}(\om_1,\om_2); \overrightarrow{\underline{Q}_2}(\om_1,\om_2);...; \overrightarrow{\underline{Q}_M}(\om_1,\om_2))
 \end{equation*}
 where $\overrightarrow{\underline{Q}_k}(\om_1,\om_2)=(Q_k(1,:),Q_k(2,:),...,Q_k(m,:))$ and $Q_k=(q^k_{n_1n_2}(\om_1,\om_2))_{m \times m}$.
Then (\ref{bandlimited original func}) becomes
\begin{equation*}
  f(x_1,x_2)=\frac{1}{2\pi}\int_{I_{11}}\overrightarrow{F}(\om_1,\om_2)\overrightarrow{E}(\om_1,\om_2,x_1,x_2)d\om_1,\om_2.
\end{equation*}
In the complex case, $ \overrightarrow{E}$     only depends on $(x_1,x_2)$ (see \cite{cheung1993multidimensional}).   However, due to the non-commutativity of the quaternion algebra,
$ \overrightarrow{E}$ depends on both $(x_1,x_2)$ and $(\om_1,\om_2)$  in quaternionic case.
\begin{lemma}\label{samples-g}
If $H_k\in L^2(I,\H)$,  then samples of the inverse QFT of  $G_k (k=1,2,...,M)$ can be expressed as:
\begin{equation*}
g_k(n_1T,n_2 T)=\frac{1}{2\pi} \int_{I_{11}}\widetilde{G}_k(\om_1,\om_2)\e^{\j\om_2n_2T}\e^{\i\om_1n_1T}d\om_1d\om_2,
\end{equation*}
where $I_{11}$ is given in (\ref{interval}).
\end{lemma}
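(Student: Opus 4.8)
The plan is to unfold $g_k=\mathcal F^{-1}G_k$ directly from the band-limited structure and then to exploit the resonance identity $cT=2\pi$ at the sampling nodes to annihilate the spurious phase factors. First I would check that $g_k$ is genuinely a pointwise-defined function, so that sampling it makes sense. Since $f\in\mathbf{B}_{\sigma}^{q}$, its QFT $F$ vanishes off $I=[-\sigma,\sigma]^2$, hence so does $G_k=FH_k$; and as $F,H_k\in L^2(I,\H)$ with $I$ bounded, the Cauchy--Schwarz inequality gives $G_k\in L^1(I,\H)\subset\La$. Therefore $g_k=\mathcal F^{-1}G_k$ (which is also $f\star h_k$ with $h_k=\mathcal F^{-1}H_k$) is bounded and continuous and
\[
g_k(x_1,x_2)=\frac{1}{2\pi}\int_I G_k(\om_1,\om_2)\,\e^{\j\om_2x_2}\e^{\i\om_1x_1}\,d\om_1d\om_2 ,
\]
so the samples $g_k(n_1T,n_2T)$ are well defined.

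Next I would slice $I$ into the $m^2$ congruent cells $I_{p_1p_2}$ of (\ref{interval}), $1\le p_1,p_2\le m$, and on each cell substitute $\om_i\mapsto\om_i+(p_i-1)c$, which carries $I_{p_1p_2}$ onto $I_{11}$. This rewrites $g_k(n_1T,n_2T)$ as
\[
\frac{1}{2\pi}\sum_{p_1,p_2=1}^{m}\int_{I_{11}}G_k\big(\om_1+(p_1-1)c,\om_2+(p_2-1)c\big)\,\e^{\j(\om_2+(p_2-1)c)n_2T}\e^{\i(\om_1+(p_1-1)c)n_1T}\,d\om_1d\om_2 .
\]
The heart of the argument is then the arithmetic $cT=\frac{2\sigma}{m}\cdot\frac{m\pi}{\sigma}=2\pi$, so that $(p_i-1)c\,n_iT\in 2\pi\mathbb Z$. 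Since for a single imaginary unit $s\in\{\i,\j\}$ one has $\e^{s(\alpha+\beta)}=\e^{s\alpha}\e^{s\beta}$ for real $\alpha,\beta$ and $\e^{s\cdot 2\pi\ell}=1$ for $\ell\in\mathbb Z$, the extra phases collapse: $\e^{\i(\om_1+(p_1-1)c)n_1T}=\e^{\i\om_1n_1T}$ and likewise $\e^{\j(\om_2+(p_2-1)c)n_2T}=\e^{\j\om_2n_2T}$ (note that only commutation of two exponentials of the \emph{same} unit is used, never the relation between $\i$ and $\j$). Each summand thus reduces to $G_k\big(\om_1+(p_1-1)c,\om_2+(p_2-1)c\big)\,\e^{\j\om_2n_2T}\e^{\i\om_1n_1T}$.

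Finally I would pull the finite sum inside the integral and recognise $\widetilde G_k$: by the definitions of $a_{p_1p_2}$, $r_{p_1p_2}^k$ and of the product $G_k=FH_k$ (with $F$ kept to the \emph{left} of $H_k$),
\[
a_{p_1p_2}(\om_1,\om_2)\,r_{p_1p_2}^k(\om_1,\om_2)=G_k\big(\om_1+(p_1-1)c,\om_2+(p_2-1)c\big),
\]
and since $\overrightarrow G=\overrightarrow F^{T}\underline H$ means precisely $\widetilde G_k=\sum_{p_1,p_2}a_{p_1p_2}r_{p_1p_2}^k$, the bracketed sum is exactly $\widetilde G_k(\om_1,\om_2)$; after relabelling the integration variables this is the asserted identity. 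I do not expect a genuine obstacle here: the proof is a change of variables together with the periodicity $cT=2\pi$. The points that do require care — rather than being hard — are establishing the pointwise well-definedness of $g_k$ in the first step, and scrupulously preserving the order of the non-commuting factors $F,H_k$ and $\e^{\j(\cdot)},\e^{\i(\cdot)}$ throughout, the latter being harmless precisely because every exponential factored off involves a single imaginary unit.
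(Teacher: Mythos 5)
Your proof is correct and follows essentially the same route as the paper's: decompose $I$ into the $m^2$ cells $I_{p_1p_2}$, shift each back to $I_{11}$, use $cT=2\pi$ so that $b_{p_1p_2}(\om_1,\om_2,n_1T,n_2T)=\e^{\j\om_2n_2T}\e^{\i\om_1n_1T}$ is independent of the cell index, and recognise the remaining sum $\sum_{p_1,p_2}a_{p_1p_2}r^k_{p_1p_2}$ as $\widetilde G_k$. Your preliminary Cauchy--Schwarz check that $G_k\in L^1(I,\H)$, so that the samples are pointwise well defined, is a small but welcome addition that the paper leaves implicit.
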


\begin{proof}
Since $H_k\in L^2(I,\H)$, then $G_k\in L^2(I,\H)$,
\begin{align*}
  g_k(x_1,x_2) &=\frac{1}{2\pi} \int_{I}G_k(\om_1,\om_2)\e^{\j\om_2x_2}\e^{\i\om_1x_1}d\om_1d\om_2 \\
  &=\frac{1}{2\pi} \int_{I}F(\om_1,\om_2)H_k(\om_1,\om_2)\e^{\j\om_2x_2}\e^{\i\om_1x_1}d\om_1d\om_2 \\
  &=\sum_{l_1=1}^{m}\sum_{l_2=1}^{m} \frac{1}{2\pi} \int_{I_{11}} a_{l_1l_2}(\om_1,\om_2) r_{l_1l_2}^k(\om_1,\om_2)    b_{l_1l_2}(\om_1,\om_2,x_1,x_2)d\om_1d\om_2\\
    &=  \frac{1}{2\pi} \int_{I_{11}}\sum_{l_1=1}^{m}\sum_{l_2=1}^{m}  a_{l_1l_2}(\om_1,\om_2) r_{l_1l_2}^k(\om_1,\om_2)    b_{l_1l_2}(\om_1,\om_2,x_1,x_2)d\om_1d\om_2
\end{align*}
 and $b_{l_1l_2}(\om_1,\om_2,n_1T,n_2T)=\e^{\j\om_2n_2T}\e^{\i\om_1n_1T}$  which is independent of $l_1l_2$. Therefore
 \begin{align*}
&~~~~ \sum_{l_1=1}^{m}\sum_{l_2=1}^{m}  a_{l_1l_2}(\om_1,\om_2) r_{l_1l_2}^k(\om_1,\om_2)    b_{l_1l_2}(\om_1,\om_2,x_1,x_2)   \\
&={\overrightarrow{F}(\om_1,\om_2)}^T\overrightarrow{\underline{H}_k}(\om_1,\om_2)\e^{\j\om_2n_2T}\e^{\i\om_1n_1T}.
 \end{align*}
It follows that
 \begin{align*}
    g_k(n_1T,n_2T)&= \frac{1}{2\pi} \int_{I_{11}}{\overrightarrow{F}(\om_1,\om_2)}^T\overrightarrow{\underline{H}_k}(\om_1,\om_2)\e^{\j\om_2n_2T}\e^{\i\om_1n_1T}d\om_1, \om_2 \\
  &=\frac{1}{2\pi} \int_{I_{11}}\widetilde{G}_k(\om_1,\om_2)\e^{\j\om_2n_2T}\e^{\i\om_1n_1T}d\om_1\om_2.
 \end{align*}
\end{proof}

\begin{lemma}\label{interpolation func}
Suppose that $q^k_{l_1l_2} \in L^2(I_{11},\H)$ and let
\begin{equation*}
 \widetilde{q}^k_{n_1n_2}(\om_1,\om_2) =q^k_{n_1n_2}(\om_1-(n_1-1)c, \om_2-( n_2-1)c)\chi_{I_{n_1n_2}}(\om_1,\om_2)
\end{equation*}
 and
 \begin{equation}\label{QFT-interpolation func}
  Y_k(\om_1,\om_2)=\frac{{T^2}}{2\pi} \sum_{n_1=1}^{m }\sum_{n_2=1}^{m }  \widetilde{q}^k_{n_1n_2}(\om_1,\om_2).
 \end{equation}
 Then for every $ (n_1,n_2)\in \mathbb{Z}^2$, $ \frac{4\pi^2}{T^2} y_k(x_1\ominus n_1T,x_2\ominus n_2 T)$ equals to
\begin{equation*}\label{lemma2}
   \int_{I_{11}} \e^{-\i\om_1n_1T} \e^{-\j\om_2n_2T}   \sum_{l_1=1}^{m}\sum_{l_2=1}^{m}q^k_{l_1l_2}(\om_1,\om_2)b_{l_1l_2}(\om_1,\om_2,x_1,x_2)d\om_1\om_2
\end{equation*}
where $y_k$ is the inverse QFT of $Y_k$.
\end{lemma}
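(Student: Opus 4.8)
The plan is to unwind the two definitions at play --- that of $Y_k$ in (\ref{QFT-interpolation func}) and that of the generalized translation in Definition~\ref{def generalized translation} --- and then to exploit the arithmetic identity $cT=2\pi$. First I would observe that each $\widetilde{q}^k_{n_1n_2}$ is supported on the bounded cube $I_{n_1n_2}$ and, lying in $L^2$ there, also lies in $L^1$; hence $Y_k$ is supported on $I=[-\sigma,\sigma]^2$ and belongs to $\La\cap\Lb$. Consequently its inverse QFT $y_k$ belongs to $\Lb$ by the Plancherel theorem, while $Y_k\in\La$, so Definition~\ref{def generalized translation} applies with $F$ taken to be $Y_k$ and gives
\[
 y_k(x_1\ominus n_1T,x_2\ominus n_2T)=\frac{1}{2\pi}\int_{I}\e^{-\i\om_1n_1T}\e^{-\j\om_2n_2T}\,Y_k(\om_1,\om_2)\,\e^{\j\om_2x_2}\e^{\i\om_1x_1}\,d\om_1d\om_2 .
\]
Since $I$ is bounded and all exponentials are unimodular, every integral below converges absolutely, so the manipulations to come are legitimate.

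Next I would insert $Y_k=\frac{T^2}{2\pi}\sum_{l_1,l_2=1}^{m}\widetilde{q}^k_{l_1l_2}$ and use that, by (\ref{interval}) together with $mc=2\sigma$, the cubes $\{I_{l_1l_2}\}_{l_1,l_2=1}^{m}$ tile $I$ up to a null set. Splitting $\int_{I}=\sum_{l_1,l_2}\int_{I_{l_1l_2}}$, on $I_{l_1l_2}$ only the $(l_1,l_2)$-term of the sum survives because of the cut-off $\chi_{I_{l_1l_2}}$, leaving $\frac{T^2}{2\pi}\,q^k_{l_1l_2}(\om_1-(l_1-1)c,\om_2-(l_2-1)c)$ in place of $Y_k$ there. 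In each resulting term I would perform the shift $\om_1\mapsto\om_1+(l_1-1)c$, $\om_2\mapsto\om_2+(l_2-1)c$, which maps $I_{l_1l_2}$ onto $I_{11}$; collecting the constants ($\frac{4\pi^2}{T^2}\cdot\frac1{2\pi}\cdot\frac{T^2}{2\pi}=1$), this reduces $\frac{4\pi^2}{T^2}y_k(x_1\ominus n_1T,x_2\ominus n_2T)$ to
\[
 \sum_{l_1=1}^{m}\sum_{l_2=1}^{m}\int_{I_{11}}\e^{-\i(\om_1+(l_1-1)c)n_1T}\,\e^{-\j(\om_2+(l_2-1)c)n_2T}\,q^k_{l_1l_2}(\om_1,\om_2)\,\e^{\j(\om_2+(l_2-1)c)x_2}\e^{\i(\om_1+(l_1-1)c)x_1}\,d\om_1d\om_2 .
\]

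The step that requires care --- and the only place where the non-commutativity of $\H$ intervenes --- is the simplification of the phases in this last expression. Because powers of a single imaginary unit commute, $\e^{-\i(\om_1+(l_1-1)c)n_1T}=\e^{-\i\om_1n_1T}\e^{-\i(l_1-1)cn_1T}$ and likewise $\e^{-\j(\om_2+(l_2-1)c)n_2T}=\e^{-\j\om_2n_2T}\e^{-\j(l_2-1)cn_2T}$; since $cT=\frac{2\sigma}{m}\cdot\frac{m\pi}{\sigma}=2\pi$ and $l_i,n_i\in\mathbb{Z}$, the scalar phases $\e^{-\i(l_1-1)cn_1T}$ and $\e^{-\j(l_2-1)cn_2T}$ are both equal to $1$ and may be deleted. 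The remaining $x$-dependent pair is exactly $\e^{\j(\om_2+(l_2-1)c)x_2}\e^{\i(\om_1+(l_1-1)c)x_1}=b_{l_1l_2}(\om_1,\om_2,x_1,x_2)$, so the general term becomes $\int_{I_{11}}\e^{-\i\om_1n_1T}\e^{-\j\om_2n_2T}q^k_{l_1l_2}(\om_1,\om_2)b_{l_1l_2}(\om_1,\om_2,x_1,x_2)d\om_1d\om_2$; finally, as $\e^{-\i\om_1n_1T}\e^{-\j\om_2n_2T}$ does not depend on $(l_1,l_2)$, it can be pulled to the front of the finite double sum, which is precisely the claimed identity. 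Beyond this ordering bookkeeping the argument is routine; the one preliminary not to overlook is the observation $Y_k\in\La$, without which the generalized translation appearing in the statement would not even be defined.
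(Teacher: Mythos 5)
Your proposal is correct and follows essentially the same route as the paper: apply Definition~\ref{def generalized translation} to $Y_k$, split the integral over the tiles $I_{l_1l_2}$, shift each tile back to $I_{11}$, and use the periodicity $cT=2\pi$ to see that the left-hand phase $\e^{-\i\om_1n_1T}\e^{-\j\om_2n_2T}$ is independent of $(l_1,l_2)$ and can be pulled out of the sum. Your preliminary observation that $Y_k\in\La\cap\Lb$ (so the generalized translation is defined) is a point the paper leaves implicit, but otherwise the two arguments coincide.
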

\begin{proof}
By rewriting  $y_k(x_1\ominus n_1T,x_2\ominus n_2 T)$ in the form of  (\ref{generalized translation}) and  substituting $\e^{\j\om_2x_2}\e^{\i\om_1x_1}$ and $\e^{-\i\om_1n_1T} \e^{-\j\om_2n_2T}$  with  $b_{l_1l_2}(\om_1-(l_1-1)c,\om_2-(l_2-1)c,x_1,x_2)$ and $$\overline{b_{n_1n_2}(\om_1-(l_1-1)c,\om_2-(l_2-1)c,n_1T,n_2T)}$$ respectively,   we obtain \begin{align}\label{lemma2eq}
    &~~~~   \frac{2\pi}{T^2}\int_{\mathbb{R}^2}\e^{-\i\om_1n_1T} \e^{-\j\om_2n_2T} Y_k(\om_1,\om_2)\e^{\j\om_2x_2}\e^{\i\om_1x_1}d\om_1d\om_2\nonumber\\
     &= \sum_{l_1=1}^{m}\sum_{l_2=1}^{m}\int_{\mathbb{R}^2}\e^{-\i\om_1n_1T} \e^{-\j\om_2n_2T}\widetilde{q}^k_{l_1l_2}(\om_1,\om_2)\e^{\j\om_2x_2}\e^{\i\om_1x_1}d\om_1d\om_2\nonumber\\
   &= \sum_{l_1=1}^{m}\sum_{l_2=1}^{m}\int_{I_{l_1l_2}}\overline{b_{n_1n_2}(\om_1-(l_1-1)c,
    \om_2(l_2-1)c,n_1T,n_2T)}\nonumber\\
     &~~~q^k_{l_1l_2}(\om_1-(l_1-1)c, \om_2-(l_2-1)c)b_{l_1l_2}(\om_1-(l_1-1)c,\om_2-(l_2-1)c,x_1,x_2)d\om_1d\om_2\nonumber\\
     &= \sum_{l_1=1}^{m}\sum_{l_2=1}^{m}\int_{I_{11}}\overline{b_{l_1l_2}(\om_1,\om_2,n_1 T,n_2 T)}q^k_{l_1l_2}(\om_1,\om_2)b_{l_1l_2}(\om_1,\om_2,x_1,x_2)d\om_1d\om_2\nonumber\\
    &=\int_{I_{11}} \e^{-\i\om_1n_1T} \e^{-\j\om_2n_2T} \sum_{l_1=1}^{m}\sum_{l_2=1}^{m}q^k_{l_1l_2}(\om_1,\om_2)b_{l_1l_2}(\om_1,\om_2,x_1,x_2)d\om_1d\om_2.
\end{align}
 which completes the proof.
\end{proof}
\begin{figure*}
  \centering
  \includegraphics[width=15.5cm]{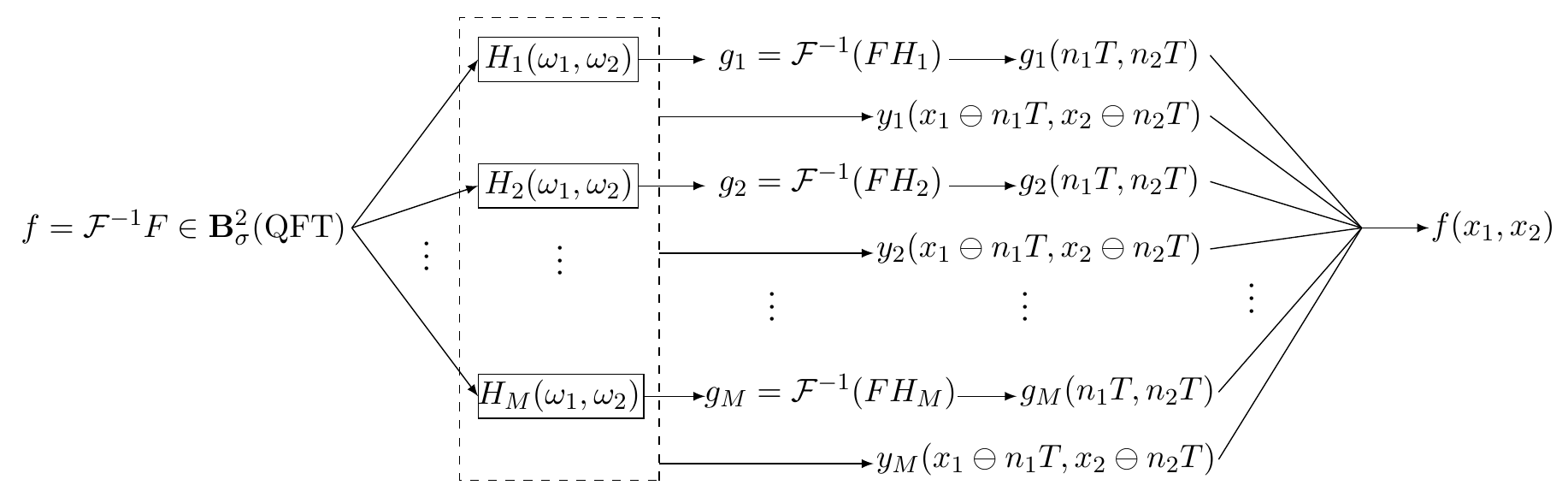}\\
   \caption{Diagram of GSE associated with QFT.}\label{diagram}
\end{figure*}

Now we give the generalized sampling expansion associated with QFT.
\begin{theorem}\label{thm-GSE-QFT}
Let $H_1,H_2,...,H_M$ such that
\begin{enumerate}
  \item $H_k\in  L^2(I,\H)$,
  \item $\underline{H}(\om_1,\om_2)$ is invertible for every $(\om_1,\om_2)\in I_{11}$ and $q^k_{l_1l_2} \in L^2(I_{11},\H)$.
\end{enumerate}
Then $f$ can be reconstructed from samples $g_k(n_1T,n_2T)$ of
\begin{align*}
  g_k(x_1,x_2) &=(f\star h_k)(x_1,x_2)\\
  &=\frac{1}{2\pi} \int_{I}F(\om_1,\om_2)H_k(\om_1,\om_2)\e^{\j\om_2x_2}\e^{\i\om_1x_1}d\om_1d\om_2.
\end{align*}
More specifically,
\begin{equation}\label{GSE-QFT}
f(x_1,x_2)=\sum_{k=1}^{M}\sum_{n_1,n_2 }  g_k(n_1T,n_2T)y_k(x_1\ominus n_1T,x_2\ominus n_2 T)
\end{equation}
where $y_k$ is the inverse QFT of $Y_k$.
\end{theorem}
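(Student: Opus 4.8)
The plan is to start from the block decomposition \eqref{bandlimited original func}, express $f$ via the vector notation as $f(x_1,x_2)=\frac{1}{2\pi}\int_{I_{11}}\overrightarrow{F}(\om_1,\om_2)\overrightarrow{E}(\om_1,\om_2,x_1,x_2)\,d\om_1d\om_2$, and then insert the identity $\underline{H}\,\underline{H}^{-1}=\mathrm{Id}$ between the two factors. Concretely, I would write $\overrightarrow{F}^T\overrightarrow{E}=\overrightarrow{F}^T\underline{H}\,\underline{H}^{-1}\overrightarrow{E}=\overrightarrow{G}\,\underline{H}^{-1}\overrightarrow{E}$, which exhibits $f$ as a sum over $k$ of integrals involving $\widetilde{G}_k$ and the rows $\overrightarrow{\underline{Q}_k}$ of $\underline{H}^{-1}$. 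The key point is that the order of factors matters — $\overrightarrow{F}$ is quaternion-valued on the left, $\underline{H}^{-1}$ in the middle, and the exponential kernels in $\overrightarrow{E}$ on the right — so I must be careful to keep this left-to-right arrangement throughout and never commute a quaternion past an imaginary exponential.

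Once $f(x_1,x_2)=\sum_{k=1}^M \frac{1}{2\pi}\int_{I_{11}}\overrightarrow{G}_k(\om_1,\om_2)\sum_{l_1,l_2}q^k_{l_1l_2}(\om_1,\om_2)b_{l_1l_2}(\om_1,\om_2,x_1,x_2)\,d\om_1d\om_2$ is established, the next step is to recognize each $\widetilde{G}_k(\om_1,\om_2)$ on $I_{11}$ as the QFT-data that, by Lemma \ref{samples-g}, has Fourier coefficients $g_k(n_1T,n_2T)$ with respect to the orthogonal system $\{\e^{\i\om_1 n_1T}\e^{\j\om_2 n_2T}\}_{(n_1,n_2)\in\mathbb{Z}^2}$ on the square $I_{11}$ of side length $c=2\pi/T$. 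Expanding $\widetilde{G}_k$ in this basis — using the Plancherel/Parseval property \eqref{Plancherel series} of the quaternionic Hilbert space $L^2(I_{11},\H)$ to justify termwise integration — replaces $\widetilde{G}_k(\om_1,\om_2)$ by $\sum_{n_1,n_2}g_k(n_1T,n_2T)\,\frac{T^2}{4\pi^2}\overline{\e^{\i\om_1 n_1T}\e^{\j\om_2 n_2T}}$ inside the integral (with the correct normalization constant coming from the length of $I_{11}$). Substituting this in, interchanging the $(n_1,n_2)$-sum with the $\om$-integral, and pulling the scalar sample $g_k(n_1T,n_2T)$ to the left, the remaining $\om$-integral is exactly $\frac{T^2}{4\pi^2}\int_{I_{11}}\overline{\e^{\i\om_1 n_1T}\e^{\j\om_2 n_2T}}\sum_{l_1,l_2}q^k_{l_1l_2}b_{l_1l_2}\,d\om_1d\om_2$, which by Lemma \ref{interpolation func} equals $y_k(x_1\ominus n_1T, x_2\ominus n_2T)$. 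Collecting everything gives \eqref{GSE-QFT}.

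The main obstacle will be the bookkeeping forced by non-commutativity, in two places. First, in the Parseval expansion of $\widetilde{G}_k$ I cannot simply write "$\langle \widetilde{G}_k, e_{n_1n_2}\rangle e_{n_1n_2}$" and rearrange freely; I need the expansion in the precise left-module form so that the scalar sample coefficient $g_k(n_1T,n_2T)$ ends up on the \emph{left} of the conjugated exponential, matching the left-placement of $\widetilde{G}_k$ in the product $\overrightarrow{G}_k\cdot(\sum q^k_{l_1l_2}b_{l_1l_2})$. Second, the conjugated kernel $\overline{\e^{\i\om_1 n_1T}\e^{\j\om_2 n_2T}}=\e^{-\j\om_2 n_2T}\e^{-\i\om_1 n_1T}$ must be matched with the form required in Lemma \ref{interpolation func}, where it appears as $\e^{-\i\om_1 n_1T}\e^{-\j\om_2 n_2T}$ acting on the left — so I should either state the expansion of $\widetilde G_k$ already in the correct ordered form, or invoke the definition of the generalized translation \eqref{generalized translation} directly, where the factor $\e^{-\i\om_1 y_1}\e^{-\j\om_2 y_2}$ sits to the left of $F$. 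I expect this to reduce to matching notation with Lemma \ref{interpolation func} once the correct ordering is fixed, so no genuinely new estimate is needed; convergence of the double series is the $L^2$ convergence of the Fourier expansion of $\widetilde G_k\in L^2(I_{11},\H)$, transported to $f$ via the bounded inverse QFT and the generalized translation.
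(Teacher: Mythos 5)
Your proposal follows essentially the same route as the paper's own proof: expressing $f$ as $\frac{1}{2\pi}\int_{I_{11}}\overrightarrow{F}\,\overrightarrow{E}$, inserting $\underline{H}\,\underline{H}^{-1}$ to obtain $\overrightarrow{G}\,\underline{H}^{-1}\overrightarrow{E}$, and then applying the quaternionic Parseval identity (\ref{Plancherel series}) with the orthonormal basis $\{\tfrac{T}{2\pi}\e^{-\i\om_1 n_1T}\e^{-\j\om_2 n_2T}\}$ on $L^2(I_{11},\H)$ so that Lemma \ref{samples-g} and Lemma \ref{interpolation func} identify the two resulting factors as $g_k(n_1T,n_2T)$ and $y_k(x_1\ominus n_1T,x_2\ominus n_2T)$. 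Your attention to the left-to-right ordering forced by non-commutativity matches the paper's treatment, so no substantive difference remains.
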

\begin{proof}
 Since  $\underline{H}(\om_1,\om_2)$ is invertible for every $(\om_1,\om_2) \in I_{11}$  then $$ {\underline{H}^{-1}(\om_1,\om_2)}\overrightarrow{E}(\om_1,\om_2,x_1,x_2)$$ is a $m\times 1$ matrix and the $k$th element equals to $$\sum_{l_1=1}^{m}\sum_{l_2=1}^{m}q^k_{l_1l_2}(\om_1,\om_2)  b_{l_1l_2}(\om_1,\om_2 ,x_1,x_2).$$ Therefore
\begin{align*}
f(x_1,x_2)  &=\frac{1}{2\pi}\int_{I_{11}}\overrightarrow{F}(\om_1,\om_2)\overrightarrow{E}(\om_1,\om_2,x_1,x_2)d\om_1d\om_2\\
     &= \frac{1}{2\pi}\int_{I_{11}}  \overrightarrow{G}(\om_1,\om_2) {\underline{H}(\om_1,\om_2)}^{-1}\overrightarrow{E}(\om_1,\om_2,x_1,x_2)d\om_1d\om_2 \\
   &=   \sum_{k=1}^{M} \frac{1}{2\pi}\int_{I_{11}}\widetilde{G}_k(\om_1,\om_2)  \sum_{l_1=1}^{m}\sum_{l_2=1}^{m}q^k_{l_1l_2}(\om_1,\om_2)  b_{l_1l_2}(\om_1,\om_2 ,x_1,x_2)d\om_1d\om_2.
\end{align*}

 As $F, H_k  \in L^2(I,\H)$, it is easy to see that $\widetilde{G}_k \in L^2(I_{11},\H)$. Also, if $q^k_{l_1l_2} \in L^2(I,\H)$  then $$\sum_{l_1=1}^{m}\sum_{l_2=1}^{m}q^k_{l_1l_2}(\om_1,\om_2)b_{l_1l_2}(\om_1,\om_2,x_1,x_2)\in L^2(I_{11},\H)$$ for every $(x_1,x_2)\in \mathbb{R}^2$.
Since $ \{\frac{T}{2\pi} \e^{-\i \om_1n_1T} \e^{-\j \om_2 n_2T}\}_{(n_1,n_2)\in \mathbb{Z}^2}$ is an orthonormal basis of $L^2(I_{11},\H)$. Therefore by invoking (\ref{Plancherel series}) we have
\begin{align*}
    &~~~~ \frac{1}{2\pi} \int_{I_{11}}\widetilde{G}_k(\om_1,\om_2) \sum_{l_1=1}^{m}\sum_{l_2=1}^{m}q^k_{l_1l_2}(\om_1,\om_2)b_{l_1l_2}(\om_1,\om_2,x_1,x_2)d\om_1d\om_2\\
     &= \sum_{n_1,n_2 } \frac{T^2}{8\pi^3}\int_{I_{11}} \widetilde{G}_k(\om_1,\om_2)\e^{\j\om_2n_2T}\e^{\i\om_1n_1T}d\om_1d\om_2  \\
      &~~\int_{I_{11}} \e^{-\i\om_1n_1T} \e^{-\j\om_2n_2T}  \sum_{l_1=1}^{m}\sum_{l_2=1}^{m}q^k_{l_1l_2}(\om_1,\om_2)b_{l_1l_2}(\om_1,\om_2,x_1,x_2)d\om_1d\om_2
\end{align*}
Hence, by Lemma \ref{samples-g} and Lemma \ref{interpolation func}, we obtain (\ref{GSE-QFT}).
\end{proof}
\section{Examples}\label{S5}

\begin{example}\label{converges fast}
If $f$ is $\sigma$-bandlimited, we have
\begin{equation}\label{classical shannon}
  f(x_1,x_2)=\sum_{n_1,n_2}f(n_1T,n_2T)\frac{\sin(\sigma x_1-n_1 \pi)\sin(\sigma x_2-n_2 \pi)}{(\sigma x_1-n_1 \pi)(\sigma x_2-n_2 \pi)}
\end{equation}
by choosing $m=1$ and $H_1(\om_1,\om_2)=1$, where $T=\frac{\pi}{\sigma}$. Let $\sigma'=\rho \sigma$  with  $\rho>1$ and
$H(\om_1,\om_2)=H^1(\om_1)H^1(\om_2)$
with \begin{equation*}
H^1(\om_1)=\begin{cases}
 1, ~~~~~~|\om_1|\leq \sigma,\\
 0,~~~~~~|\om_1|\geq \sigma',\\
 \frac{1}{(1-\rho)\sigma}|\om_1|+\frac{\rho}{\rho-1},~~\sigma\leq |\om_1|\leq \sigma'.
\end{cases}
\end{equation*}
Note that $f$ is  $\sigma'$-bandlimited. Therefore, by applying Theorem \ref{thm-GSE-QFT} with $M=1$ and $H(\om_1,\om_2)$ defined above, we have
\begin{equation}\label{fast than classical shannon}
 f(x_1,x_2)=\sum_{n_1,n_2}f(n_1T',n_1T')y(x_1- n_1T,x_2- n_2 T)
\end{equation}
where $$y(x_1,x_2)=\frac{4(\sin^2\frac{ \rho\sigma x_1}{2}-\sin^2\frac{\sigma x_1}{2})(\sin^2\frac{ \rho\sigma x_2}{2}-\sin^2\frac{\sigma x_2}{2})}{x_1^2x_2^2\rho^2(\rho-1)^2\sigma^4}$$
and $T'=\frac{\pi}{\sigma'}=\frac{T}{\rho}<T$. For any fixed $(x_1,x_2)\in \mathbb{R}^2$, (\ref{fast than classical shannon}) converges faster than (\ref{classical shannon}). It illustrates that convergence rate of sampling series can be  enhanced by  increasing the  sampling frequency.
\end{example}

\begin{example}
Express a $\sigma$-bandlimited function $f(x_1, x_2)$  from  the samples $g(n_1T,n_2T)$ of the integral
\begin{equation*}
  g(x_1,x_2)= \frac{1}{2\pi}\int_{I}F(\om_1,\om_2)H(\om_1,\om_2)\e^{\j\om_2x_2}\e^{\i\om_1x_1}d\om_1, \om_2
\end{equation*}
where $H(\om_1, \om_2)= {\alpha \beta}{(\beta+\j \om_2)^{-1}(\alpha+\i \om_1)^{-1}}(\alpha,\beta>0)$, $I=[-\sigma,\sigma]^2$, $T=\frac{\pi}{\sigma}$. In fact, by Theorem \ref{CONvo}, we have
$$g(x_1, x_2)=\frac{1}{2\pi}\int_{\mathbb{R}^2}f(y_1,y_2)\widetilde{h}(x_1\ominus y_2, x_2\ominus y_2)dy_1dy_2$$
where
\begin{equation*}
 \widetilde{h}(x_1\ominus y_1, x_2\ominus y_2)=\frac{1}{2\pi}\int_{I}\e^{-\i\om_1y_1}\e^{-\j\om_2y_2}H(\om_1,\om_2) \e^{\j\om_2x_2}\e^{\i\om_1x_1}d\om_1d\om_2.
\end{equation*}
It is easy to see that $H$ satisfies all conditions of Theorem \ref{thm-GSE-QFT}. From (\ref{QFT-interpolation func}), we have
\begin{equation*}
  {Y(\om_1, \om_2)=\frac{T^2(\alpha+\i \om_1)(\beta+\j \om_2)}{2\pi \alpha \beta}\chi_{I}(\om_1,\om_2).}
\end{equation*}
By direct computation,  $4\pi{T^{-2}} y (x_1\ominus n_1T, x_2\ominus n_2T)$ is given by
 \begin{align*}
    &~   \frac{4\sin{\sigma(x_1-n_1T)}\sin{\sigma(x_2-n_2T)}}{(x_1-n_1T)(x_2-n_2T)} \\ &~  -\frac{4\sin{\sigma(x_2+n_2T)}[\sin{\sigma(x_1-n_1T)}-
    \sigma(x_1-n_1T)\cos{\sigma(x_1-n_1T)}]}{\alpha(x_2+n_2T)(x_1-n_1T)^2} \\
     &~  -\frac{4\sin{\sigma(x_1-n_1T)}[\sin{\sigma(x_2-n_2T)}-
    \sigma(x_2-n_2T)\cos{\sigma(x_2-n_2T)}]}{\beta(x_1-n_1T)(x_2-n_2T)^2} \\
    &~ +\frac{4[\sin{\sigma(x_1-n_1T)}-
    \sigma(x_1-n_1T)\cos{\sigma(x_1-n_1T)}]}{\alpha\beta(x_1-n_1T)^2(x_2+n_2T)^2} \\ &~~~~~ [\sin{\sigma(x_2+n_2T)}-
    \sigma(x_2+n_2T)\cos{\sigma(x_2+n_2T)}].
\end{align*}
\end{example}
\begin{example}
Theorem \ref{thm-GSE-QFT} permits us to express a $\sigma$-bandlimited function $f(x_1, x_2)$   from  its samples and  samples of its partial derivatives. Let $m=2$, $T=\frac{2\pi}{\sigma}$, $c=\sigma$ and $H_1(\om_1, \om_2)=1$, $ H_2(\om_1, \om_2)=\i \om_1$, $H_3(\om_1, \om_2)=\j \om_2$, $H_4(\om_1, \om_2)= \k \om_1 \om_2$. By [], we have $g_1(x_1, x_2)=f(x_1, x_2)$, $g_2(x_1, x_2)= \frac{\partial f}{\partial x_1}(x_1,-x_2)$, $g_3(x_1, x_2)=\frac{\partial f}{\partial x_2}(x_1,x_2)$, $g_4(x_1, x_2)=-\frac{\partial^2 f}{\partial x_1x_2}(x_1,-x_2)$.
Furthermore,
$$\underline{H}=\begin{pmatrix} 1&\i \om_1&\j \om_2 & \k \om_1\om_2\\ 1&\i \om_1&\j (\om_2+c) &\k \om_1(\om_2+c) \\ 1&\i (\om_1+c)&\j \om_2 &  \k( \om_1+c)\om_2 \\ 1&\i (\om_1+c)&\j (\om_2+c) & \k( \om_1+c)(\om_2+c) \end{pmatrix}=A_1+A_2\j$$
where $A_1=\begin{pmatrix} 1&\i \om_1&0 & 0\\ 1&\i \om_1&0 &0 \\ 1&\i (\om_1+c)&0 &  0\\ 1&\i (\om_1+c)&0& 0 \end{pmatrix}$ and $A_2=\begin{pmatrix} 0&0&\om_2 & \i \om_1\om_2\\ 0&0& (\om_2+c) &\i \om_1(\om_2+c) \\ 0&0& \om_2 &  \i( \om_1+c)\om_2 \\ 0&0& (\om_2+c) & \i( \om_1+c)(\om_2+c) \end{pmatrix}.$
The complex adjoint matrix \cite{zhang1997quaternions} of $\underline{H}$ denoted by $C(\underline{H})$ is defined   as
\begin{equation*}
 C(\underline{H})=\begin{pmatrix}A_1&A_2\\-\overline{A_2} &\overline{A_1 }\end{pmatrix}.
\end{equation*}
Zhang \cite{zhang1997quaternions} showed that  $\underline{H}$ is invertible if and only if $ C(\underline{H})$ is invertible. Moreover, $C(\underline{H}^{-1})=[C(\underline{H})]^{-1}$ if $\underline{H}^{-1}$ exsits. Since $|C(\underline{H})]|=c^8\neq 0$ for every $(\om_1,\om_2)\in I$. Therefore $\underline{H}$ is invertible for every $(\om_1,\om_2)\in I$. In fact, $[C(\underline{H})]^{-1}=\left(U_1,U_2\right)$  where
\begin{equation*}
U_1={\left(
\begin{array}{cccccccc}
 \frac{(c+\om_2) (c+\om_1)}{c^2} & -\frac{\om_2 (c+\om_1)}{c^2} & -\frac{(c+\om_2) \om_1}{c^2} & \frac{\om_1 \om_2}{c^2}   \\
 \frac{\i (c+\om_2)}{c^2} & -\frac{\i \om_2}{c^2} & -\frac{\i (c+\om_2)}{c^2} & \frac{\i \om_2}{c^2} \\
 0 & 0 & 0 & 0 \\
 0 & 0 & 0 & 0   \\
 0 & 0 & 0 & 0   \\
 0 & 0 & 0 & 0  \\
 -\frac{c+\om_1}{c^2} & \frac{c+\om_1}{c^2} & \frac{\om_1}{c^2} & -\frac{\om_1}{c^2} \\
 -\frac{\i}{c^2} & \frac{\i}{c^2} & \frac{\i}{c^2} & -\frac{\i}{c^2}  \\
\end{array}
\right)}
\end{equation*}
and
\begin{equation*}
U_2={\left(
\begin{array}{cccccccc}
   0 & 0 & 0 & 0 \\
  0 & 0 & 0 & 0 \\
   \frac{c+\om_1}{c^2} & -\frac{c+\om_1}{c^2} & -\frac{\om_1}{c^2} & \frac{\om_1}{c^2} \\
     -\frac{\i}{c^2} & \frac{\i}{c^2} & \frac{\i}{c^2} & -\frac{\i}{c^2} \\
 \frac{(c+\om_2) (c+\om_1)}{c^2} & -\frac{\om_2 (c+\om_1)}{c^2} & -\frac{(c+\om_2) \om_1}{c^2} & \frac{\om_2 \om_1}{c^2} \\
  -\frac{\i (c+\om_2)}{c^2} & \frac{\i \om_2}{c^2} & \frac{\i (c+\om_2)}{c^2} & -\frac{\i v}{c^2} \\
 0 & 0 & 0 & 0 \\
  0 & 0 & 0 & 0 \\
\end{array}
\right)}.
\end{equation*}
Thus
\begin{equation*}
  \underline{H}^{-1}=\left(
\begin{array}{cccc}
 \frac{(c+\om_2) (c+\om_1)}{c^2} & -\frac{\om_2 (c+\om_1)}{c^2} & -\frac{(c+\om_2) \om_1}{c^2} & \frac{\om_2 \om_1}{c^2} \\
 \frac{\i (c+\om_2)}{c^2} & -\frac{\i \om_2}{c^2} & -\frac{\i (c+\om_2)}{c^2} & \frac{\i \om_2}{c^2} \\
 \frac{\j(c+\om_1)}{c^2} & -\frac{\j(c+\om_1)}{c^2} & -\frac{\j\om_1}{c^2} & \frac{\j\om_1}{c^2} \\
 -\frac{\k}{c^2} & \frac{\k}{c^2} & \frac{\k}{c^2} & -\frac{\k}{c^2} \\
\end{array}
\right)
\end{equation*}
and hence
\begin{align*}
&~~~~ \frac{1}{2\pi}Y_1(\om_1,\om_2)\\
&= \sigma^{-4}(c+\om_2) (c+\om_1)\chi_{I_{11}}(\om_1,\om_2)- \sigma^{-4}(\om_2-c) (c+\om_1)\chi_{I_{12}}(\om_1,\om_2)\\
    &~~ -  \sigma^{-4}(c+\om_2) (\om_1-c)\chi_{I_{21}}(\om_1,\om_2)+\sigma^{-4}(\om_2-c) (\om_1-c)\chi_{I_{22}}(\om_1,\om_2).
\end{align*}
By direct computation,
\begin{equation*}
  y_1(x_1\ominus n_1T,x_2\ominus n_2T)=\frac{16
  \sin^2{(\frac{\sigma}{2}x_1-n_1\pi)}\sin^2{(\frac{\sigma}{2}x_2-n_2\pi)}}{\sigma^4(x_1-n_1T)^2(x_2-n_2T)^2}.
\end{equation*}
Similarly, we have
\begin{align*}
   y_2(x_1\ominus n_1T,x_2\ominus n_2T) &=  \frac{16
  \sin^2{(\frac{\sigma}{2}x_1-n_1\pi)}\sin^2{(\frac{\sigma}{2}x_2+n_2\pi)}}{\sigma^4(x_1-n_1T)(x_2+n_2T)^2}, \\
   y_3(x_1\ominus n_1T,x_2\ominus n_2T) &= \frac{16
  \sin^2{(\frac{\sigma}{2}x_1-n_1\pi)}\sin^2{(\frac{\sigma}{2}x_2-n_2\pi)}}{\sigma^4(x_1-n_1T)^2(x_2-n_2T)}, \\
   y_4(x_1\ominus n_1T,x_2\ominus n_2T) &=  -\frac{16
  \sin^2{(\frac{\sigma}{2}x_1-n_1\pi)}\sin^2{(\frac{\sigma}{2}x_2+n_2\pi)}}{\sigma^4(x_1-n_1T)(x_2+n_2T)}.
\end{align*}
 \end{example}

 \section{Sampling theorem for Quaternion linear canonical transform}\label{S6}

 The right-sided quaternion linear canonical transform (QLCT)  which is generalization of linear canonical transform (LCT)
 to quaternion algebra, was firstly studied in  \cite{kou2013uncertainty}. In this section, we  investigate the sampling theory associated with QLCT. The  right-sided QLCT of a signal $f\in \La$ with real matrix parameter $A_i=\begin{pmatrix} a_i&b_i \\ c_i&d_i \end{pmatrix}\in \mathbb{R}^{2\times2}$   such that $\det{(A_i)}=1$ for $i=1,2$  is defined by \cite{kou2013uncertainty}
 \begin{equation*}
   (\mathcal{L}f)(\om_1,\om_2) :=\int_{\mathbb{R}^2} f(x_1,x_2)K_{A_1}^{\i}(x_1,\om_1)K_{A_2}^{\j}(x_2,\om_2)dx_1dx_2
 \end{equation*}
 where
\begin{equation}\label{kernel1}
  K_{A_1}^{\i}(x_1,\om_1):=\frac{1}{\sqrt{\i  2\pi b_1}}\e^{\i(\frac{a_1}{2b_1}x_1^2-\frac{1}{b_1}x_1\om_1+\frac{d_1}{2b_1}w_1^2)},~~~ \text{for}~~ b_1\neq0
\end{equation}
and
\begin{equation}\label{kernel2}
  K_{A_2}^{\j}(x_2,\om_2):=\frac{1}{\sqrt{\j 2\pi b_2}}\e^{\j(\frac{a_2}{2b_2}x_2^2-\frac{1}{b_2}x_2\om_2+\frac{d_2}{2b_2}w_2^2)},~~~ \text{for}~~ b_2\neq0.
\end{equation}
Here, $\frac{1}{\sqrt{\u  2\pi b }}$  represents $|2\pi b|^{\frac{-1}{2}} \e^{\u \frac{  \mathrm{sgn} b-2}{4}\pi}$ for any pure imaginary unit quaternion $\u$ and nonzero real number $b$.

 If $\mathcal{L}f$ is also in $\La$,  the the inversion QLCT formula \cite{cheng2016properties} holds,  that is
\begin{equation*}
f(x_1,x_2)=(\mathcal{L}^{-1}\mathcal{L}f)(x_1,x_2):=  \int_{\mathbb{R}^2}(\mathcal{L}f)(\om_1,\om_2) K_{A_2^{-1}}^{\j}(\om_2,x_2) K_{A_1^{-1}}^{\i}(\om_1,x_1)d\omega_1d\omega_2,
\end{equation*}
for almost every $(x_1,x_2)\in \mathbb{R}^2$.
 By Plancherel theorem \cite{cheng2016properties}, the QLCT can be extended to $\Lb$. As an operator on $\Lb$, the QLCT $\Phi$ is a
 bijection and  the  Parseval's identity $\|\Phi f\|_2=\|f\|_2$ holds. Since $\Phi(\Phi^{-1})$ coincides with  $ \mathcal{L}(\mathcal{L}^{-1})$ in $\La\cap \Lb$.  So we use letter $F_{\mathbf{A}}$ to denote the QLCT of $f\in \La \cup \Lb$.

 In the classical case, the LCT is just a variation of the standard Fourier transform, some of
 its properties can be deduced from those of the Fourier transform by a change of variable. Moreover, the proofs of many sampling formulae associated with the LCT are somewhat based on those of the Fourier transform. In the quaternionic case,  however,  $\mathcal{L}$  can not directly establish relation with $\mathcal{F}$ as mentioned in \cite{cheng2016properties}. So it is hard to derive GSE associated with the QLCT from existing  results of the QFT. On the other hand,   Lemma \ref{samples-g} is based on periodicity of kernel $\e^{\j\om_2x_2}\e^{\i\om_1x_1}$, but due to the  quadratic term of kernel (\ref{kernel1})   and  (\ref{kernel2}) in the QLCT, periodicity of kernel  no longer possess.   When $M>1$, (\ref{GSE-QFT}) of Theorem \ref{thm-GSE-QFT} is also called \emph{multichannel sampling expansion}.  In the following, we give a single-channel sampling expansion associated with the QLCT, that is we only focus on the case of $M=1$.

 Firstly, we introduce the generalized translation  related to the QLCT:
 \begin{align}\label{eneralized translation  related to the QLCT}
  &~~~ ¡¡f(x_1\boxminus y_1,x_2 \boxminus y_2)\\
  &:=\int_{\mathbb{R}^2} \overline{K_{A_2^{-1}}^{\j}(\om_2,y_2) K_{A_1^{-1}}^{\i}(\om_1,y_1)} F_{\mathbf{A}} (\om_2,\om_2)K_{A_2^{-1}}^{\j}(\om_2,x_2) K_{A_1^{-1}}^{\i}(\om_1,x_1)d\om_1d\om_2
 \end{align}
 provided taht the right-hand side integral is well defined. Then we have the following theorem.
 \begin{theorem}\label{thm-GSE-QLCT}
 Suppose that $f$ is $\sigma$-bandlimited in QLCT sense, that is
 \begin{equation*}
  f(x_1,x_2)= \int_{I}F_{\mathbf{A}} (\om_2,\om_2)K_{A_2^{-1}}^{\j}(\om_2,x_2) K_{A_1^{-1}}^{\i}(\om_1,x_1)d\om_1d\om_2
\end{equation*}
where $F_{\mathbf{A}}\in L^2(I,\H)$ and $I=[-\sigma,\sigma]^2$. Let
$$g(x_1,x_2)=\int_{I}F_{\mathbf{A}} (\om_2,\om_2) H(\om_1,\om_2)K_{A_2^{-1}}^{\j}(\om_2,x_2) K_{A_1^{-1}}^{\i}(\om_1,x_1)d\om_1d\om_2$$
 where $H(\om_1,\om_2), H(\om_1,\om_2)^{-1} \in L^2(I,\H)$.
Let $$Y_{\mathbf{A}}(\om_1,\om_2)=T^2 |b_1b_2| H(\om_1,\om_2)^{-1}\chi_{I}(\om_1,\om_2).$$
Then $f$ can be reconstructed from samples $g (n_1b_1T,n_2b_2T)$:
 \begin{equation}\label{GSE-QLCT}
\addtocounter{equation}{1}
f(x_1,x_2)= \sum_{n_1,n_2 }  g(n_1b_1T,n_2b_2T)y (x_1\boxminus n_1b_1T,x_2\boxminus n_2 b_2T)
\end{equation}
where $y $ is the inverse QLCT of $Y_{\mathbf{A}}$ and $T=\frac{ \pi}{\sigma}$.
\end{theorem}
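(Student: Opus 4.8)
The plan is to mimic the structure of the proof of Theorem~\ref{thm-GSE-QFT} in the single-channel setting ($M=1$, $m=1$), replacing the QFT kernel $\e^{\j\om_2x_2}\e^{\i\om_1x_1}$ by the QLCT kernel $K_{A_2^{-1}}^{\j}(\om_2,x_2)K_{A_1^{-1}}^{\i}(\om_1,x_1)$ throughout, and to locate the right orthonormal basis of $L^2(I,\H)$ that plays the role of $\{\frac{T}{2\pi}\e^{-\i\om_1n_1T}\e^{-\j\om_2n_2T}\}$. The key observation that makes this work, despite the quadratic chirp terms in \eqref{kernel1}--\eqref{kernel2} destroying periodicity, is that $K_{A_1^{-1}}^{\i}(\om_1,x_1)$ factors as a chirp in $\om_1$ times $\e^{\i\om_1 x_1/b_1}$ times a constant; so sampling at $x_1=n_1b_1T$ turns the linear part into $\e^{\i\om_1 n_1 T}$, which is again periodic on $I$. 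First I would extract this factorization explicitly and verify that $\{\sqrt{\tfrac{b_1 b_2}{}}\,\tfrac{T}{2\pi}\,\overline{K_{A_2^{-1}}^{\j}(\om_2,n_2 b_2 T)K_{A_1^{-1}}^{\i}(\om_1,n_1 b_1 T)}\}_{(n_1,n_2)\in\mathbb{Z}^2}$ (up to a normalizing constant I would compute from $|K|$) is an orthonormal basis of $L^2(I,\H)$ — this reduces to the periodicity of the linear exponential part, the chirp factors being unimodular and cancelling in inner products.

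Next I would compute the samples $g(n_1 b_1 T, n_2 b_2 T)$. Since $F_{\mathbf A} H \in L^2(I,\H)$ by the hypothesis $H, H^{-1}\in L^2(I,\H)$, substituting $x_i = n_i b_i T$ into the integral defining $g$ and using the chirp factorization gives $g(n_1 b_1 T, n_2 b_2 T) = \int_I F_{\mathbf A}(\om_1,\om_2)H(\om_1,\om_2)\,(\text{chirp in }\om)\,\e^{\j\om_2 n_2 T}\e^{\i\om_1 n_1 T}\,d\om_1 d\om_2$, i.e.\ up to the fixed chirp weight these are exactly the generalized Fourier coefficients of $F_{\mathbf A} H$ against the basis above. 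In parallel, I would compute $y(x_1\boxminus n_1 b_1 T, x_2\boxminus n_2 b_2 T)$ by writing it via \eqref{eneralized translation  related to the QLCT} with $F_{\mathbf A}$ replaced by $Y_{\mathbf A}=T^2|b_1b_2|H^{-1}\chi_I$; the conjugated kernel evaluated at the sample points again produces $\overline{\e^{\i\om_1 n_1 T}\e^{\j\om_2 n_2 T}}$ times chirp factors that combine with those from $K_{A_i^{-1}}^{\cdot}(\om_i,x_i)$. This is the QLCT analogue of Lemma~\ref{interpolation func}, and I expect $y(x_1\boxminus n_1 b_1 T, x_2\boxminus n_2 b_2 T)$ to equal $\int_I \overline{\e^{\i\om_1 n_1 T}\e^{\j\om_2 n_2 T}}\,H(\om_1,\om_2)^{-1}K_{A_2^{-1}}^{\j}(\om_2,x_2)K_{A_1^{-1}}^{\i}(\om_1,x_1)\,d\om_1 d\om_2$ after the $T^2|b_1b_2|$ and $2\pi$ normalizations are absorbed.

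Finally I would assemble the reconstruction. Starting from $f(x_1,x_2)=\int_I F_{\mathbf A}(\om_1,\om_2)K_{A_2^{-1}}^{\j}(\om_2,x_2)K_{A_1^{-1}}^{\i}(\om_1,x_1)\,d\om_1 d\om_2$, I insert $F_{\mathbf A} = (F_{\mathbf A} H)H^{-1}$, recognize $F_{\mathbf A} H\in L^2(I,\H)$ and $H^{-1}K_{A_2^{-1}}^{\j}K_{A_1^{-1}}^{\i}\in L^2(I,\H)$ for each fixed $(x_1,x_2)$, and apply the Parseval-type expansion \eqref{Plancherel series} with the orthonormal basis from the first step. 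The $\langle F_{\mathbf A} H, e_n\rangle$ factors are (constant multiples of) the samples $g(n_1 b_1 T, n_2 b_2 T)$, and the $\langle e_n, H^{-1}K_{A_2^{-1}}^{\j}K_{A_1^{-1}}^{\i}\rangle$ factors are the interpolation functions $y(x_1\boxminus n_1 b_1 T, x_2\boxminus n_2 b_2 T)$; tracking the constants yields exactly \eqref{GSE-QLCT}.

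The main obstacle I anticipate is bookkeeping the scalar normalization constant $\frac{1}{\sqrt{\u 2\pi b}} = |2\pi b|^{-1/2}\e^{\u(\mathrm{sgn}\,b - 2)\pi/4}$ and the quadratic chirp factors $\e^{\pm\i a_1\om_1^2/(2b_1)}$, $\e^{\pm\j a_2 \om_2^2/(2b_2)}$ through the non-commutative products — one must be careful that the $\j$-chirp and $\i$-chirp sit on the correct sides of $F_{\mathbf A}$ and $H^{-1}$ so that they genuinely cancel when forming inner products, and that the constant $T^2|b_1 b_2|$ in $Y_{\mathbf A}$ is precisely what makes the $\langle\cdot,e_n\rangle\langle e_n,\cdot\rangle$ product collapse to $g\cdot y$ with no leftover factor. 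A secondary point needing care is that the generalized translation \eqref{eneralized translation  related to the QLCT} is well defined here because $Y_{\mathbf A}$ is compactly supported and, with $H^{-1}\in L^2(I,\H)$, lies in $L^1(I,\H)\cap L^2(I,\H)$, so all interchanges of sum and integral are justified by the continuity of the quaternionic inner product exactly as in the proof of Theorem~\ref{thm-GSE-QFT}.
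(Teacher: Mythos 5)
Your proposal is correct and follows essentially the same route as the paper: the paper's proof likewise observes that $\varphi_{n_1n_2}(\om_1,\om_2):=T|b_1b_2|^{1/2}\,\overline{K_{A_2^{-1}}^{\j}(\om_2,n_2b_2T)K_{A_1^{-1}}^{\i}(\om_1,n_1b_1T)}$ is an orthonormal basis of $L^2(I,\H)$, inserts $H(\om_1,\om_2)H(\om_1,\om_2)^{-1}$ into the inversion integral for $f$, and applies the Parseval-type identity (\ref{Plancherel series}) so that the two resulting factors are exactly $g(n_1b_1T,n_2b_2T)$ and $y(x_1\boxminus n_1b_1T,x_2\boxminus n_2b_2T)$. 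Your normalizing constant in the basis is left slightly vague, but you flag that it must be computed from $|K|$, and the rest of the bookkeeping you describe (chirp cancellation in inner products, the role of $T^2|b_1b_2|$ in $Y_{\mathbf{A}}$) matches what the paper's computation implicitly uses.
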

\begin{proof}
We note that $\varphi_{n_1n_2}(\om_1,\om_2):=T |b_1b_2|^{\frac{1}{2}}\overline{K_{A_2^{-1}}^{\j}(\om_2,n_2b_2T) K_{A_1^{-1}}^{\i}(\om_1,n_1b_1T)}$ is an orthonormal basis of $L^2(I ,\H)$.
Therefore by invoking (\ref{Plancherel series}) we have
\begin{align*}
  f(x_1,x_2)   &=  \int_{I}F_{\mathbf{A}} (\om_2,\om_2)K_{A_2^{-1}}^{\j}(\om_2,x_2) K_{A_1^{-1}}^{\i}(\om_1,x_1)d\om_1d\om_2\\
  &= \int_{I}F_{\mathbf{A}} (\om_2,\om_2)H(\om_1,\om_2) H(\om_1,\om_2)^{-1}K_{A_2^{-1}}^{\j}(\om_2,x_2) K_{A_1^{-1}}^{\i}(\om_1,x_1)d\om_1d\om_2\\
     &= \sum_{n_1,n_2 } \left( \int_{I}F_{\mathbf{A}} (\om_2,\om_2)H(\om_1,\om_2)\overline{\varphi_{n_1n_2}(\om_1,\om_2)}d\om_1d\om_2 \right) \\  &~~\left (\int_{I}\varphi_{n_1n_2}(\om_1,\om_2) H(\om_1,\om_2)^{-1}K_{A_2^{-1}}^{\j}(\om_2,x_2) K_{A_1^{-1}}^{\i}(\om_1,x_1)d\om_1d\om_2 \right)  \\
     &= \sum_{n_1,n_2 } g(n_1b_1T,n_2b_2T)y (x_1\boxminus n_1b_1T,x_2\boxminus n_2 b_2T)
\end{align*}
which completes the proof.
 \end{proof}

 Now we give an example for this Theorem.  Suppose that  $f$ is $\sigma$-bandlimited in QLCT sense and let $H(\om_1,\om_2)=1$, then $g(x_1,x_2)=f(x_1,x_2)$. Therefore
  \begin{equation*}
   f(x_1,x_2)= \sum_{n_1,n_2 }  f(n_1b_1T,n_2b_2T)y (x_1\boxminus n_1b_1T,x_2\boxminus n_2 b_2T)
  \end{equation*}

  By (\ref{eneralized translation  related to the QLCT}) we obtain $y (x_1\boxminus n_1b_1T,x_2\boxminus n_2 b_2T)=\varrho_1+\varrho_2 \varrho_3$ where $\varrho_1, \varrho_2, \varrho_3$, respectively, are
 \begin{equation*}
   \frac{T^2b_1|b_2|}{\pi^2}\cos \left(\frac{a_2b_2n_2^2T^2}{2}-\frac{a_2x_2^2}{2b_2} \right)\frac{\sin (n_1\pi-\frac{\pi x_1^2}{b_1T})\sin (n_2\pi-\frac{\pi x_2^2}{b_2T})}{(n_1b_1T-x_1)(n_2b_2T-x_2)}\e^{\i (\frac{a_1b_1n_1^2T^2}{2}-\frac{a_1x_1^2}{2b_1})},
 \end{equation*}
 \begin{equation*}
     \erf \left(\frac{|b_1b_2|^\frac{-1}{2}}{2T}(2d_1\pi- b_1n_1T^2-x_1T)\right)+\erf \left(\frac{|b_1b_2|^\frac{-1}{2}}{2T}(2d_1\pi+b_1n_1T^2+x_1T)\right)
 \end{equation*}
 and
 \begin{equation*}
 \left( \frac{|b_1|}{\pi}\right)^\frac{3}{2}\frac{T^2|d_1|^{\frac{-1}{2}}}{4}\sin \left(\frac{a_2b_2n_2^2T^2}{2}-\frac{a_2x_2^2}{2b_2}\right)\frac{\sin(n_2\pi-\frac{\pi x_2^2}{b_2T})}{n_2b_2T-x_2} \e^{\i(\frac{a_1b_1n_1^2T^2}{2}+\frac{a_1x_1^2}{2b_1}-\frac{(b_1n_1T+x_1)^2}{4b_1d_1})}\j.
 \end{equation*}

\section{Conclusion}\label{S7}

In this paper, we introduced the GSE associated with QFT. The GSE formula illustrates how a bandlimited quaternion valued signal can be recovered from the samples of  system output signals. This has been realized by taking  advantage of   generalized translation
and convolution. Moreover, we have further discussed  the sampling formula for $\sigma$-bandlimited  quaternion valued signal in quaternion linear canonical transform sense.

\section{Acknowledgements}\label{S7}
The authors acknowledge financial support from the National Natural Science Foundation of China under Grant (No. 11401606), University of Macau (No. MYRG2015-00058-L2-FST and No. MYRG099(Y1-L2)-FST13-KKI) and the Macao Science and Technology
Development Fund (No. FDCT/094/2011/A and No. FDCT/099/2012/A3).

\end{document}